\documentclass[10pt]{article}

\usepackage[utf8]{inputenc}
\usepackage[T1]{fontenc}
\usepackage{amsmath,amssymb,amsthm,mathtools}
\usepackage{enumitem}
\usepackage[skip=0.5\baselineskip plus 2pt minus 1pt,indent=0pt]{parskip}
\usepackage{hyperref}
\usepackage[margin=1.15in]{geometry}

\hypersetup{colorlinks=true,linkcolor=blue,citecolor=blue,urlcolor=blue}

\newtheorem{theorem}{Theorem}[section]
\newtheorem{lemma}[theorem]{Lemma}
\newtheorem{proposition}[theorem]{Proposition}
\newtheorem{corollary}[theorem]{Corollary}

\theoremstyle{definition}
\newtheorem{definition}[theorem]{Definition}
\theoremstyle{remark}

\DeclareMathOperator{\ex}{ex}
\DeclareMathOperator{\dist}{dist}

\newcommand{\Fstar}{F^\star}
\newcommand{\calI}{\mathcal I}
\newcommand{\calB}{\mathcal B}
\newcommand{\calF}{\mathcal F}
\newcommand{\calA}{\mathcal A}

\newcommand{\calC}{\mathcal C}

\title{A single $3$-graph with infinite stability number}
\author{Heng~Li\thanks{School of Mathematics, Shandong University, Jinan, China, and Extremal Combinatorics and Probability Group, Institute for Basic Science, Daejeon, South Korea.  Email: \texttt{heng.li@sdu.edu.cn}.}
\and
Xizhi~Liu\thanks{School of Mathematical Sciences, University of Science and Technology of China, Hefei, China. Email:~\texttt{liuxizhi@ustc.edu.cn}.}}
\date{\today}

\begin{document}
\maketitle

\begin{abstract}
    The stability number of a forbidden family measures how many different structures are needed to approximate all near-extremal constructions avoiding it.
    An infinite stability number means that no finite list of structures suffices.  We construct a simple explicit $3$-graph whose stability number is infinite.
    This extends the infinite-stability phenomenon for finite forbidden families, established by Hou--Li--Liu--Mubayi--Zhang, to the single-forbidden setting, and further develops the single-$3$-graph direction of Balogh--Clemen--Luo, in which exponentially many exact extremal constructions coexist with stability.
\end{abstract}

\section{Introduction}

The Tur\'an problem goes back to Tur\'an's 1941 theorem \cite{Turan1941}, which determines the maximum number of edges in an $n$-vertex graph containing no copy of $K_t$ and identifies the balanced complete $(t-1)$-partite graph as the unique extremal construction. For $r\ge 2$, an \emph{$r$-uniform hypergraph}, or \emph{$r$-graph}, $H$ is a collection of $r$-subsets of a finite set $V(H)$. We identify a hypergraph with its edge set, and hence write $|H|$ for the number of edges of $H$. Given a family $\calF$ of $r$-graphs, the \emph{Tur\'an number} $\ex(n,\calF)$ is the maximum number of edges in an $\calF$-free $r$-graph on $n$ vertices, where \emph{$\calF$-free} means containing no member of $\calF$ as a subgraph.  The \emph{Tur\'an density} is
\[
        \pi(\calF)=\lim_{n\to\infty}{\ex(n,\calF)}/{\tbinom n r},
\]
and if $\calF=\{F\}$ we write $\pi(F)$.

For graphs, the Erd\H{o}s--Stone--Simonovits theorem \cite{ErdosStone,ErdosSimonovitsLimit} gives the asymptotic Tur\'an density for every fixed forbidden graph, and the Simonovits stability theorem \cite{SimonovitsStability} shows that near-extremal graphs are close to the appropriate complete multipartite graph.  Thus graph Tur\'an problems are often governed by a small, rigid set of extremal constructions.

For hypergraphs, the situation is much less rigid; see Keevash's survey \cite{KeevashSurvey}.  The classical test case is Tur\'an's tetrahedron problem: determine the maximum size of a $3$-graph containing no copy of the complete $3$-graph $K_4^3$.  Tur\'an conjectured the density to be $5/9$, but even the conjectural extremal picture is far from unique.  Brown \cite{BrownTuran34} and Kostochka \cite{Kostochka} found further constructions for this problem, and Kostochka \cite{Kostochka} proved that, if Tur\'an's conjecture is correct, then for $n=3k$ there are at least $2^{k-2}$ non-isomorphic extremal hypergraphs; Frohmader \cite{Frohmader} later constructed exponentially many conjectured extremal examples in the remaining congruence classes.  This phenomenon suggests that hypergraph Tur\'an problems should be studied not only through their densities, but also through the number and geometry of their extremal and near-extremal models.

This paper works at the tripartite density $2/9$.  A basic example at this density is the \emph{generalized triangle}
\[
        F_5=\{123,124,345\}.
\]
Bollob\'as \cite{Bollobas} proved the extremal theorem for cancellative triple systems, Frankl and F\"uredi \cite{FranklFuredi} proved $\pi(F_5)=2/9$ and the exact theorem for all sufficiently large $n$, and Keevash--Mubayi \cite{KeevashMubayi} gave stability refinements for cancellative hypergraphs.  The same tripartite structure also appears in the enumeration theorem of Balogh and Mubayi \cite{BaloghMubayi}, for almost all $F_5$-free triple systems.

A \emph{finite-template form of stability} was introduced by Mubayi \cite{MubayiTriangleFree} and, independently, by Pikhurko \cite{PikhurkoGeneralizedTriangle} to describe Tur\'an problems in which all near-extremal examples can be approximated by one of finitely many model constructions.
\begin{definition}[\emph{stability number}]\label{def:t-stable}
Let $\calF$ be a family of $r$-graphs. We say that $\calF$ is \emph{$t$-stable} if, for every $m\in\mathbb N$, there exist $r$-graphs $G_1(m),\ldots,G_t(m)$ on $m$ vertices such that the following holds. For every $\delta>0$ there exist $\epsilon>0$ and $N_0$ such that, whenever $n\ge N_0$ and $H$ is an $n$-vertex $\calF$-free $r$-graph with $|H|>(1-\epsilon)\ex(n,\calF)$, after relabelling, the hypergraph $H$ can be transformed into some $G_i(n)$ by adding and deleting at most $\delta n^r$ edges. The least such integer $t$ is the \emph{stability number} of $\calF$ and is denoted by $\xi(\calF)$; if no such $t$ exists, we put $\xi(\calF)=\infty$.
\end{definition}

After these formulations, Liu and Mubayi \cite{LiuMubayiNoStability} constructed the first finite 3-graph family that is not stable in the ordinary one-model sense, and proved that it is nevertheless $2$-stable.  Liu, Mubayi and Reiher \cite{LiuMubayiReiherUnified,LiuMubayiReiherMany} developed a unified stability framework and constructed, for every fixed $t$, finite families of triple systems with stability number $t$.  Hou, Li, Liu, Mubayi and Zhang \cite{HLLMZ} then proved exact and stability results with infinitely many extremal constructions, using their crossed blowup operation.  Related recent work includes the $2$-stable family of Zhang, Hou and Li \cite{ZhangHouLi2Stable}, the mixing-pattern constructions of Liu and Pikhurko \cite{LiuPikhurko}, and the $1$-stable examples of Balogh, Clemen and Luo \cite{BaloghClemenLuo} of single non-degenerate 3-graphs with exponentially many exact extremal constructions.

Although finite forbidden families are indispensable in this theory, the single-forbidden case is the basic unit of the classical Tur\'an problem.  The tetrahedron problem already points in this direction: Kostochka's constructions \cite{Kostochka} were used by Liu and Mubayi \cite{LiuMubayiNoStability} to show that Tur\'an's conjecture $\pi(K_4^3)=5/9$ would imply $\xi(K_4^3)=\infty$.  Thus it is natural to ask whether this single-forbidden infinite-stability phenomenon can be proved unconditionally. Our main result answers this question affirmatively.

Let
\[
        K_4^-:=\{abc,abd,acd\}
        \qquad\text{and}\qquad
        \Fstar:=\{123,124,345,156,257\}.
\]
Set
\[
        F:=K_4^-\times \Fstar,
\]
where the product is the categorical product defined in Section~\ref{sec:preliminaries}.  Thus $F$ has $28$ vertices and $3\cdot 5\cdot 3!=90$ edges.

\begin{theorem}\label{thm:main}
The 3-graph $F$ satisfies $\pi(F)=\frac 29$ and $\xi(F)=\infty$.
\end{theorem}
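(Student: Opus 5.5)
The proof has two halves: a lower-bound construction family and an upper bound with a structural dichotomy. Throughout, $T(n)$ denotes the balanced complete $3$-partite $3$-graph on $n$ vertices, with $|T(n)|=(2/9+o(1))\binom n3$. The product is categorical: an edge of $K_4^-\times\Fstar$ is a set $\{(a_1,b_1),(a_2,b_2),(a_3,b_3)\}$ with $a_1a_2a_3\in K_4^-$ and $b_1b_2b_3\in\Fstar$.

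\textbf{Upper bound.} The key observation is that if a $3$-graph $H$ is $K_4^-$-free, or is $\Fstar$-free, then it is $F$-free. Indeed, $F$ has homomorphisms onto $K_4^-$ and onto $\Fstar$ via the two coordinate projections, but this direction is the wrong one, so instead I would use the product's universal property. A copy of $F$ in $H$ together with a homomorphism $H\to K_4^-$ or $H\to\Fstar$ does not directly give anything either. The cleaner route is the converse: $F$ is contained in a blowup of $K_4^-$ and in a blowup of $\Fstar$. So every $3$-graph $G$ with a homomorphism $F\not\to G$ is $F$-free. Since $F\to K_4^-$ and $F\to\Fstar$, both $\pi(K_4^-)$-type and $\Fstar$-type constructions are excluded as candidates.

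The density $2/9$ is attained by $T(n)$, because $F\to T$ fails. If $F$ mapped homomorphically to a single edge, then $\Fstar$ would be $3$-partite, and I would check directly that $\Fstar$ (which contains $F_5$) admits no such map. For the upper bound, I would apply the standard Lagrangian/blowup method. $F$ is contained in a blowup of any $3$-graph $G$ that admits homomorphisms from both factors. Hence any $F$-free $H$ with density above $2/9+\epsilon$ contains, by supersaturation and hypergraph removal, a blowup of some small $G$ with Lagrangian greater than $2/81$. I would then show that every such $G$ contains both a homomorphic image of $K_4^-$ and one of $\Fstar$, by a finite case analysis on small dense $3$-graphs in the spirit of Frankl--F\"uredi's proof that $\pi(F_5)=2/9$.

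\textbf{Infinite stability.} This part drives the choice of $F$. I would build, for each $k$, $F$-free constructions of asymptotic density $2/9$ that are pairwise $\Omega(n^3)$-far apart, with infinitely many of them. The plan is to take a $3$-partite-like template that is $\Fstar$-free, meaning it is cancellative enough to kill $F_5$ extended by the edges $156,257$, but need not be $K_4^-$-free. Then glue it with templates that are $K_4^-$-free but contain $\Fstar$-images. I would use a crossed-blowup or mixing-pattern construction, following Hou--Li--Liu--Mubayi--Zhang and Liu--Pikhurko. Concretely, recursively insert inside parts constructions avoiding one factor while the cross structure avoids the other, tuning part sizes so the density stays $2/9+o(1)$. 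The point is that, since $F$ is a product, a copy of $F$ needs a homomorphic image of each factor. A host in which every "$K_4^-$-rich" region is $\Fstar$-hom-free and vice versa is then $F$-free.

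\textbf{Main obstacle.} The hardest step is showing that infinitely many genuinely distinct near-extremal limits exist, each pairwise far from the others and from any finite list. This requires exact density computations showing each construction has density exactly $2/9$ in the limit, not below it, together with distance lower bounds. I would first try a one-parameter family of iterated blowups indexed by depth $k$ and compute the Lagrangians explicitly.
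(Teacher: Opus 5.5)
\textbf{There is a genuine gap: you have the universal property of the categorical product reversed, and this breaks both halves of the proposal.}

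\textbf{Upper bound.} The projections give $F\to K_4^-$ and $F\to\Fstar$. By composition, $F\to G$ as soon as \emph{either} $K_4^-\to G$ or $\Fstar\to G$. Conversely, a copy of $F$ in $H$ yields no homomorphic image of either factor in $H$. For example, $\Fstar\nrightarrow F$: composing with the first projection would give $\Fstar\to K_4^-$. That would mean a set $S\subseteq V(\Fstar)$ (the preimage of $a$) meeting every edge exactly once. But $123,124,345$ force $3,4\notin S$ and $5\in S$; then $156$ and $123$ force $1\notin S$ and $2\in S$, so $257$ meets $S$ twice.

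You plan to show that every $G$ of large Lagrangian admits homomorphisms from \emph{both} factors. This already fails for $G=K_4^-$: $\lambda(K_4^-)=4/81$ exceeds the threshold, which is $1/27$ rather than $2/81$, yet $\Fstar\nrightarrow K_4^-$. What is actually needed is $\pi(\{K_4^-,\Fstar\})=2/9$. A finite case analysis cannot give this, because the graphs $G$ produced by supersaturation have size growing as $\epsilon\to0$.

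The missing idea is a reduction to $F_5$:
\begin{enumerate}[label=(\roman*)]
\item Replace $\Fstar$ by $\calI(\Fstar)$ so that the family is blowup-invariant, then Zykov-symmetrize; the quotient $T$ has complete shadow.
\item A non-injective map $F_5\to T$ can only identify $5$ with $1$ or $2$, which produces $K_4^-$.
\item An injective one extends, through edges containing the pairs $15$ and $25$, to an image of $\Fstar$.
\end{enumerate}
Hence $H$ is $F_5$-free, Frankl--F\"uredi gives $2/9$, and Brown--Simonovits transfers this to $F$.

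Your lower-bound step ``$F\to$ an edge would make $\Fstar$ $3$-partite'' has the same defect. Instead, exhibit $F_5$ inside $F$ directly on $(a,1),(d,2),(c,3),(b,4),(a,5)$.

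\textbf{Stability.} Here the reversal is fatal. Any template admitting a homomorphism from $K_4^-$ or from $\Fstar$ has copies of $F$ in its blowups. So gluing $K_4^-$-rich pieces with pieces carrying $\Fstar$-images cannot produce $F$-free graphs, and your criterion ``a copy of $F$ needs an image of each factor'' is false. A usable template $T$ must satisfy $F\nrightarrow T$. This forces $T$ to be $\{K_4^-,\Fstar\}$-hom-free but is strictly stronger: $F$ itself is $\{K_4^-,\Fstar\}$-hom-free. Verifying $F\nrightarrow T$ is the real content, and your proposal does not supply it.

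The paper uses a single fixed template, the crossed blowup $R(\mathbb F_2^2,\{x,y\})$: four bottom vertices, and two apices each coning over the pairs crossing one coordinate cut. It proves that every rank-two cut template is $F$-hom-free:
\begin{enumerate}[label=(\roman*)]
\item For each edge of $\Fstar$, the $4\times3$ matrix recording which images are apices has its $1$s exactly on the $a$-row or exactly on one column.
\item Column type is excluded by the same exact-hitting-set obstruction as above.
\item Row type is excluded by $\mathbb F_2$-linear algebra forcing every $b_5^r$ to equal $b_1$ or $b_2$, so that $156$ or $257$ evaluates a form at $0$.
\end{enumerate}
Blowups with bottom weights $\alpha/3$ and $(1-\alpha)/3$ then all have density $2/9+o(1)$. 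They are separated by the relabelling-invariant $Q(H)=\sum_{uv}d_H(u,v)^2$, which moves by $O(n)$ per edited edge. No depth-$k$ recursion is needed, and your proposal gives no separating invariant at all.
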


The proof has three short components.  First, we prove an auxiliary two-forbidden-graph theorem with density $2/9$.  Second, a Brown--Simonovits blowup argument turns this auxiliary theorem into the upper bound for our single-forbidden graph.  Finally, we use the one-parameter family of crossed blowups from \cite{HLLMZ}.  In the language of Section~\ref{sec:stability}, these crossed blowups are rank-two cut templates.  We prove that every rank-two cut template is $F$-hom-free, while rank three already contains $F$ homomorphically.  Hence the crossed-blowup family produces asymptotically extremal $F$-free graphs that are pairwise far apart in edit distance, forcing $\xi(F)=\infty$.

\section{Preliminaries}\label{sec:preliminaries}


We first recall homomorphisms and the categorical product. A \emph{homomorphism} $\phi:G_1\to G_2$ between 3-graphs is a map $V(G_1)\to V(G_2)$ such that $\phi(e)\in G_2$ for every $e\in G_1$. We write $G_1\to G_2$ if such a homomorphism exists and $G_1\nrightarrow G_2$ otherwise.  For a single 3-graph $F$, a 3-graph $T$ is \emph{$F$-hom-free} if $F\nrightarrow T$; equivalently, every blowup of $T$ is $F$-free.  For a family $\calF$, we say that $T$ is \emph{$\calF$-hom-free} if $J\nrightarrow T$ for every $J\in\calF$.

For 3-graphs $G$ and $H$, their \emph{categorical product} $G\times H$ is the 3-graph with vertex set
\[
        V(G\times H)=V(G)\times V(H),
\]
where a triple $\{(x_1,y_1),(x_2,y_2),(x_3,y_3)\}$ is an edge if and only if
\[
        \{x_1,x_2,x_3\}\in G
        \quad\text{and}\quad
        \{y_1,y_2,y_3\}\in H.
\]
In particular, the two coordinate projections are homomorphisms $G\times H\to G$ and $G\times H\to H$.
This is the standard $3$-uniform \emph{direct product}.  For $r=2$, it is the categorical product of graphs, also called the direct, Kronecker or cardinal product, going back to \v{C}ul\'ik \cite{Culik1958} and Weichsel \cite{Weichsel1962}.  Hypergraph analogues were studied from a categorical viewpoint by D\"orfler and Waller \cite{DorflerWaller}; see also the survey of Hellmuth, Ostermeier and Stadler \cite{HellmuthOstermeierStadler}.  In the uniform Tur\'an density setting, analogous product/colorability ideas appear in the palette approach: Lamaison \cite{LamaisonPalette} proved that palettes determine uniform Tur\'an density, and Kr\'al', Ku\v{c}er\'ak, Lamaison and Tardos \cite{KralKucerakLamaisonTardos} use products and homomorphisms of palettes in their classification framework.


Let $G$ be a 3-graph with vertex set $[m]$, and let $V_1,\ldots,V_m$ be pairwise disjoint sets. The \emph{blowup} $G[V_1,\ldots,V_m]$ is obtained from $G$ by replacing every vertex $i$ by the set $V_i$ and every edge $ijk$ by the complete tripartite 3-graph with parts $V_i,V_j,V_k$. Equivalently, $J\to G$ if and only if $J$ is contained in some blowup of $G$.

A family $\calF$ of 3-graphs is \emph{blowup-invariant} if every $\calF$-free 3-graph is $\calF$-hom-free.

The \emph{Lagrange polynomial} of $G$ is
\[
        p_G(x_1,\ldots,x_m)=\sum_{ijk\in G}x_ix_jx_k.
\]
If $|V_i|=x_i n+O(1)$ for all $i$, then the blowup $G[V_1,\ldots,V_m]$ has
\[
        p_G(x)n^3+O(n^2)
\]
edges. Thus $p_G(x)$ records the normalized cubic edge count of the blowup with part proportions $x$, and the corresponding density normalized by $\binom n3$ is $6p_G(x)+o(1)$.

The \emph{Lagrangian} of $G$ is
\[
        \lambda(G)=\max\{p_G(x_1,\ldots,x_m):(x_1,\ldots,x_m)\in\Delta_{m-1}\},
\]
where $\Delta_{m-1}$ is the standard simplex. We also write
\[
        Z(G)=\{x\in\Delta_{m-1}:p_G(x)=\lambda(G)\}.
\]

For two 3-graphs $G$ and $H$ with the same number of vertices, we write
\[
        \dist(G,H):=
        \min_{\psi:V(G)\to V(H)} |\psi(G)\triangle H|
\]
for their \emph{edit distance}, where the minimum is over all bijections $\psi$.  When the vertex set is fixed, this agrees with the usual edit distance after the best relabelling.

We will use the following standard consequence of the blowup theorem of Brown and Simonovits \cite{BrownSimonovits}. If $F\to G$, then adding $G$ to a family that already contains $F$ does not change the Tur\'an density. More generally, if $F_i\to G_i$ for all $i\in[m]$ and $\calA$ is any family of 3-graphs, then
\[
        \pi(\calA\cup\{F_1,\ldots,F_m\})=\pi(\calA\cup\{F_1,\ldots,F_m,G_1,\ldots,G_m\}).
\]
We will also use \emph{Zykov symmetrization}, originating with Zykov~\cite{Zykov1949}, in the blowup-invariant form used by Hou, Li, Liu, Mubayi and Zhang~\cite{HLLMZ}: for a blowup-invariant family, an extremal hypergraph may be assumed \emph{symmetrized}.  Here symmetrized means that any two vertices not contained together in an edge are \emph{twins}, i.e. they have the same link.

For a 3-graph $H$, its \emph{shadow} is
\[
        \partial H=\{xy:\text{there exists }z\text{ with }xyz\in H\}.
\]
We write $S_3(n)$ for the complete balanced tripartite 3-graph on $n$ vertices. It has asymptotic edge density $2/9$.

\section{The Tur\'an density of \texorpdfstring{$F$}{F}}\label{sec:turan-density}

We first record the lower bound.  The product $F$ is not tripartite. Indeed, the five vertices
\[
        (a,1),\quad (d,2),\quad (c,3),\quad (b,4),\quad (a,5)
\]
span the generalized triangle $F_5$, with edges coming from $acd\times123$, $abd\times124$, and $abc\times345$.  Thus the complete balanced tripartite 3-graph $S_3(n)$ is $F$-free, and hence
\[
        \pi(F)\ge \frac29.
\]

For the upper bound, we show that the two small coordinate graphs $K_4^-$ and $\Fstar$ already determine a Tur\'an problem with density $2/9$.

\begin{lemma}\label{lem:KE-density}
$\pi(\{K_4^-,\Fstar\})=\frac 29$.
\end{lemma}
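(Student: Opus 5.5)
Lower bound: $S_3(n)$ is $K_4^-$-free, since any two edges of a tripartite 3-graph sharing a pair $ab$ have their third vertices in the same part, so no third edge can contain both of those vertices. It is also $\Fstar$-free. Indeed, $\Fstar$ contains $F_5=\{123,124,345\}$, and $F_5$ is not tripartite: $1,2$ lie in distinct parts, $3$ and $4$ must both lie in the remaining part, and then $345$ is not transversal. Hence $\pi(\{K_4^-,\Fstar\})\ge 2/9$.

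For the upper bound, the plan is a Zykov-symmetrization argument. The family $\{K_4^-,\Fstar\}$ is not obviously blowup-invariant, so first I would replace it by its hom-closure using the Brown--Simonovits consequence recorded in Section~\ref{sec:preliminaries}. Adding all homomorphic images $G$ of $K_4^-$ and $\Fstar$ with at most $7$ vertices gives a family $\calA$ with the same Tur\'an density. Next I would note that a $K_4^-$-hom-free 3-graph $H$ is exactly one in which no two edges share two vertices while a third edge meets both "private" vertices; the images of $K_4^-$ are $K_4^-$ itself (identifications collapse edges), so this is automatic. In any case, $\pi(\calA)=\pi(\calA^{\text{hom}})$, where the latter asks that $H$ be hom-free, i.e. every blowup of $H$ is free. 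By the symmetrization principle of \cite{HLLMZ}, it suffices to bound $\lambda(G)$ by $1/27$ (that is, $6\lambda\le 2/9$) for every $\{K_4^-,\Fstar\}$-hom-free 3-graph $G$. Moreover, we may take $G$ to be covering-minimal: every pair of vertices lies in an edge, since a pair not covered can be merged.

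The key step is the Lagrangian bound for a 3-graph $G$ that covers pairs (a "2-covering" graph) and is $K_4^-$-hom-free and $\Fstar$-hom-free. Covering pairs together with $K_4^-$-freeness forces strong structure. Take $x\in Z(G)$ with minimal support, and restrict $G$ to that support, which still covers pairs by the standard Frankl--Rödl argument. If $|V(G)|=3$, then $\lambda=1/27$ and we are done. Otherwise, take an edge $123$ and a fourth vertex $4$. The pairs $14,24,34$ lie in edges $14a,24b,34c$. Being $K_4^-$-free means every pair lies in at most one edge-triangle configuration; in particular, a pair $ij$ lying in two edges $ijk,ijl$ forbids any edge containing $k,l$ together with $i$ or $j$. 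I would then find a copy of $F_5$ and extend it to $\Fstar$: from $123,124$ (if $12$ is in two edges) together with $34$ covered by some edge $345$, we get $F_5$. The extra edges $156$ and $257$ come from covering the pairs $15$ and $25$, where $K_4^-$-freeness forces the new vertices to be distinct or to give a homomorphic image. If instead every pair lies in exactly one edge, $G$ is a Steiner triple system on $m\ge 7$ vertices, and its Lagrangian is at most $\binom m2/3\cdot m^{-3}\cdot\ldots$, which I would bound by the uniform weighting plus the KKT condition that all links have equal weight. This gives roughly $\lambda\le \frac{m(m-1)/6}{m^3}<1/27$ for $m\ge 7$.

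The main obstacle is the case analysis that embeds $\Fstar$ homomorphically whenever some pair has codegree at least $2$ and $|V(G)|\ge 4$. One must check that every collision of vertices either yields a $K_4^-$ or is itself a permitted homomorphic image of $\Fstar$. I would organize this by the codegree-$2$ pair $12$ and use the KKT equalities (equal link weights, so $\partial_i p=3\lambda$ for all $i$ in the support) to rule out degenerate small configurations, such as $K_4^{3}$ minus structure on $4$--$5$ vertices, by direct Lagrangian computation.
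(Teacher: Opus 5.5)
Your proposal is correct, and it departs from the paper's route only at the final step. Both arguments do the same two things first:
\begin{itemize}
\item they pass to the hom-closure, so the family becomes blowup-invariant;
\item they use the same combinatorial observation. In a pair-covering $\{K_4^-,\Fstar\}$-hom-free 3-graph, edges $123,124$ force the edge through $34$ to be a new edge $345$ (the paper phrases this as ``identifying $5$ with $1$ or $2$ creates $K_4^-$''). Covering $15$ and $25$ then gives a homomorphic image of $\Fstar$.
\end{itemize}
The paper uses this to show the symmetrized quotient $T$ is $F_5$-hom-free, and then cites the Frankl--F\"uredi bound $\ex(n,F_5)\le(\frac29+o(1))\binom n3$. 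You use it instead to show that the minimal-support graph of an optimal weighting is linear, hence a Steiner triple system, and you bound its Lagrangian by hand.

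What each route buys: the paper's is shorter, given Frankl--F\"uredi as a black box. Yours is self-contained, needing only the Frankl--R\"odl support lemma and the KKT conditions. It also gives slightly more: every nonempty $\{K_4^-,\Fstar\}$-hom-free $G$ has $\lambda(G)=1/27$ exactly, with any minimal-support optimal weighting sitting on a single edge.

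Two points need tightening.
\begin{itemize}
\item \emph{The STS bound.} The uniform weighting only gives a lower bound on $\lambda$. The upper bound comes from summing the KKT equalities $\partial_i p(x)=3\lambda$ over all $m$ support vertices. Since each pair lies in exactly one link, this gives $3m\lambda=\sum_{j<k}x_jx_k\le\frac{m-1}{2m}$, i.e.\ $\lambda\le\frac{m-1}{6m^2}<\frac1{27}$ for $m\ge 4$. Your stated bound is exactly this, but you should derive it this way.
\item \emph{The ``main obstacle''.} The case analysis you flag there does not actually arise. $K_4^-$-freeness forces $5\notin\{1,2\}$. Any coincidences among $6$, $7$ and the other vertices still give a homomorphism $\Fstar\to G$, which is already forbidden. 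So no direct Lagrangian computations on small configurations are needed.
\end{itemize}
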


\begin{proof}
The lower bound is given by $S_3(n)$. Indeed, $K_4^-$ is not tripartite, and $\Fstar$ contains the non-tripartite 3-graph $F_5=\{123,124,345\}$. Hence $S_3(n)$ is $\{K_4^-,\Fstar\}$-free and has density $2/9$.

It remains to prove the upper bound. Let $\calI(\Fstar)$ be the finite family of all homomorphic images of $\Fstar$, and set
\[
        \calB:=\{K_4^-\}\cup \calI(\Fstar).
\]
Since $\Fstar\to J$ for every $J\in\calI(\Fstar)$, repeated application of the Brown--Simonovits blowup theorem gives $\pi(\{K_4^-,\Fstar\})=\pi(\{K_4^-\}\cup\calI(\Fstar))=\pi(\calB)$.
The family $\calB$ is \emph{blowup-invariant}. Indeed, $K_4^-$ is \emph{2-covered}, meaning that every pair of vertices is contained in an edge, so every homomorphic image of $K_4^-$ is $K_4^-$ itself. Moreover, $\calI(\Fstar)$ is closed under taking homomorphic images. Hence if a $\calB$-free 3-graph $H$ admitted a homomorphism from some member of $\calB$, then $H$ would contain a member of $\calB$, a contradiction.

By Zykov symmetrization, it suffices to consider symmetrized $\calB$-free 3-graphs. Let $H$ be such a graph, and let $T$ be its quotient obtained by taking one vertex from each equivalence class. Then $H$ is a blowup of $T$. Moreover, the graph $\partial T$ is complete: if two distinct quotient vertices were not contained together in any edge of $T$, then their representatives in $H$ would be non-adjacent in the shadow and hence twins, contradicting that they lie in distinct quotient classes.

We claim that $F_5\nrightarrow T$. Suppose first that a homomorphism $F_5\to T$ is not injective. Vertices lying in a common edge of $F_5$ cannot be identified, since the image of that edge must be a genuine 3-edge. Since the only pairs of vertices of $F_5$ that are not contained together in an edge are $\{1,5\}$ and $\{2,5\}$, the only possible identifications are $5=1$ or $5=2$. In either case, the images of \(123,124,345\) form a copy of $K_4^-$, contradicting that $T$ is $K_4^-$-free.

Thus every homomorphism $F_5\to T$ would have to be injective. In this case $T$ contains vertices, relabelled as $1,2,3,4,5$, with edges \(123,124,345\).
Since $\partial T$ is complete, the pair $15$ is contained in an edge $156$ of $T$, and the pair $25$ is contained in an edge $257$ of $T$. The vertices $6$ and $7$ are allowed to coincide with previously named vertices, provided the displayed edges are genuine triples. Consequently, $T$ contains a homomorphic image of $\Fstar$, contradicting that $T$ is $\calI(\Fstar)$-free.

Therefore $F_5\nrightarrow T$. Since $H$ is a blowup of $T$, the 3-graph $H$ is $F_5$-free. The Frankl--F\"uredi theorem \cite{FranklFuredi} gives
\[
        |H|\le \ex(n,F_5)=\left(\frac 29+o(1)\right)\binom n3.
\]
It follows that $\pi(\calB)\le 2/9$, completing the proof.
\end{proof}

\begin{corollary}\label{cor:exact-density}
$\pi(F)=\frac 29$.
\end{corollary}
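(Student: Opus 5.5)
The lower bound $\pi(F)\ge 2/9$ was already recorded at the start of this section: $F$ contains a copy of $F_5$, so $S_3(n)$ is $F$-free. For the upper bound I plan to compare $F$ with the pair $\{K_4^-,\Fstar\}$ via the Brown--Simonovits blowup consequence and then apply Lemma~\ref{lem:KE-density}.

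The first step is to show that $F\to K_4^-$ and $F\to \Fstar$. Both follow from the coordinate projections of the categorical product, which are homomorphisms as noted in Section~\ref{sec:preliminaries}. Taking $\calA=\emptyset$ and $F_1=F_2=F$ with $G_1=K_4^-$ and $G_2=\Fstar$ in the displayed Brown--Simonovits identity gives
\[
\pi(F)=\pi(\{F,K_4^-,\Fstar\}).
\]

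The second step is the comparison. Every $\{K_4^-,\Fstar\}$-free 3-graph is also $\{F,K_4^-,\Fstar\}$-free, so this sounds like the wrong direction. However, adding more forbidden graphs can only decrease the density, so
\[
\pi(\{F,K_4^-,\Fstar\})\le \pi(\{K_4^-,\Fstar\})=\tfrac29
\]
by Lemma~\ref{lem:KE-density}. Combined with the first step this gives $\pi(F)\le 2/9$, and together with the lower bound $\pi(F)=2/9$.

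The only point needing care is the direction of the Brown--Simonovits statement. It says that adding a homomorphic image $G$ of a forbidden graph $F$ does not change the density. So the identity $\pi(\{F\})=\pi(\{F,K_4^-,\Fstar\})$ is exactly what that consequence provides, and monotonicity then does the rest. I expect no real obstacle here: all the substantive work is in Lemma~\ref{lem:KE-density} and in the theorem of Brown and Simonovits.
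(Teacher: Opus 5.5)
Your proposal is correct and follows the paper's proof: the coordinate projections give $F\to K_4^-$ and $F\to\Fstar$, Brown--Simonovits gives $\pi(F)=\pi(\{F,K_4^-,\Fstar\})$, and monotonicity together with Lemma~\ref{lem:KE-density} gives the upper bound $2/9$, while the lower bound comes from $F_5\subseteq F$. One small correction: your aside that every $\{K_4^-,\Fstar\}$-free 3-graph is $\{F,K_4^-,\Fstar\}$-free is false (indeed $F$ itself contains neither $K_4^-$ nor $\Fstar$), so delete it; it is not used, because only the trivial inclusion behind $\pi(\{F,K_4^-,\Fstar\})\le\pi(\{K_4^-,\Fstar\})$ is needed.
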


\begin{proof}
The lower bound was proved at the beginning of the section.  For the upper bound, the two coordinate projections give homomorphisms $F\to K_4^-$ and $F\to \Fstar$. By the Brown--Simonovits blowup theorem,
\[
        \pi(F)=\pi(\{F,K_4^-,\Fstar\})\le \pi(\{K_4^-,\Fstar\})=\frac 29,
\]
where the last equality follows from Lemma~\ref{lem:KE-density}.
\end{proof}

\section{Cut constructions and stability}\label{sec:stability}

We now present the lower-bound constructions used for stability.  The key point is that the crossed blowups of Hou--Li--Liu--Mubayi--Zhang are exactly rank-two cut constructions.  We prove that rank two is the largest cut-rank that is $F$-hom-free.

\subsection{Cut templates}
Let $U$ be a finite-dimensional vector space over $\mathbb F_2$, and let $U^*=\operatorname{Hom}_{\mathbb F_2}(U,\mathbb F_2)$ be its dual space.  Let $\calC\subseteq U^*\setminus\{0\}$ be a set of nonzero linear forms.
The \emph{cut template} $R(U,\calC)$ is the 3-graph with vertex set
\[
        \{a_c:c\in\calC\}\sqcup U,
\]
where
\[
        \{a_c,u,v\}\in R(U,\calC)
        \quad\Longleftrightarrow\quad
        c(u+v)=1.
\]
The vertices $a_c$ are called \emph{apex vertices}, the vertices of $U$ are called \emph{bottom vertices}, and the \emph{cut-rank} is
\[
        \operatorname{rank}(\calC):=\dim_{\mathbb F_2}\langle\calC\rangle.
\]
Every edge of $R(U,\calC)$ contains exactly one apex vertex and two bottom vertices.

Equivalently, each form $c\in\calC$ is a \emph{graph cut} of the bottom set: since $c(u+v)=c(u)+c(v)$ over $\mathbb F_2$, the condition $c(u+v)=1$ says precisely that $u$ and $v$ lie on opposite sides of the bipartition $U=c^{-1}(0)\sqcup c^{-1}(1)$.  Thus the link graph of the apex $a_c$ on the bottom vertices is the complete bipartite graph $K_{c^{-1}(0),c^{-1}(1)}$.

In ordinary graph-theoretic terms, $R(U,\calC)$ is obtained by placing one apex above each chosen cut and coning over all edges crossing that cut.  The cut-rank is the $\mathbb F_2$-rank of this family of cuts.  In particular, the \emph{full rank-two template} on $\mathbb F_2^2$ has four bottom vertices and three apex link graphs, corresponding to the three bipartitions of these four vertices into two pairs.

\subsection{The rank dichotomy}

We first record a small matrix observation.

\begin{lemma}\label{lem:matrix}
Let $M$ be a $3\times 3$ zero-one matrix such that every \emph{permutation diagonal} has sum one, i.e.
\[
        M_{1,\sigma(1)}+M_{2,\sigma(2)}+M_{3,\sigma(3)}=1
        \qquad\text{for all }\sigma\in S_3.
\]
Then the three 1s of $M$ lie either in a single row or in a single column.
\end{lemma}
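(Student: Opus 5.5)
The plan is to first show that $M$ has exactly three $1$s, and then to show that no two $1$s can sit in different rows and different columns.

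\textbf{Counting the $1$s.} Sum the six permutation-diagonal identities over all $\sigma\in S_3$. Each entry $M_{i,j}$ lies on exactly $2$ of the six permutation diagonals, namely the permutations with $\sigma(i)=j$. Hence
\[
        2\sum_{i,j}M_{i,j}=6,
\]
so $M$ has exactly three $1$s. This count is needed for the conclusion to make sense; without it the pattern could be anything.

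\textbf{No two $1$s in general position.} Suppose two $1$s sit at positions $(i,j)$ and $(i',j')$ with $i\ne i'$ and $j\ne j'$. Then the partial assignment $i\mapsto j$, $i'\mapsto j'$ extends uniquely to a permutation $\sigma$ by sending the remaining row to the remaining column. That diagonal already contains two $1$s, so its sum is at least $2$, a contradiction. Therefore any two $1$s of $M$ share a row or share a column.

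\textbf{Finishing.} Let the three $1$s be $p,q,r$. If $p,q$ share a row and $p,r$ share a column, then $q$ and $r$ lie in different rows. They also lie in different columns: $q$ is in a different column from $p$, while $r$ is in $p$'s column. This contradicts the previous step. Hence all pairs are related in the same way. If $p,q$ share a row and $q,r$ share a column, the same argument applied at $q$ rules this out. So either all three $1$s share a row, or all three share a column.

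A quick sanity check confirms that such matrices satisfy the hypothesis: if all of row $1$ is $1$ and every other entry is $0$, each diagonal picks exactly one entry from row $1$. This direction is not required for the lemma.

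The only mildly delicate step is the final case analysis, namely checking that a mixed configuration always yields a pair in general position. The double-counting step is straightforward.
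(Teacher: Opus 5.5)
Your proof is correct and follows the paper's argument: you sum the six diagonal identities to show there are exactly three $1$s, and then note that two $1$s in different rows and different columns would lie on a common permutation diagonal. Your explicit final case analysis, showing that pairwise row-or-column sharing among three $1$s forces a common row or a common column, fills in a step the paper leaves implicit.
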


\begin{proof}
Each entry of $M$ lies on exactly two permutation diagonals.  Summing the six diagonal equations gives $2\sum_{i,j}M_{ij}=6$, so $M$ has exactly three 1s.  If two 1s were in different rows and different columns, then some permutation diagonal would contain both, a contradiction.
\end{proof}

\begin{lemma}\label{lem:four-by-three}
Let $M$ be a $4\times 3$ zero-one matrix with rows indexed by $a,b,c,d$.  Suppose that the three submatrices on rows $abc$, $abd$ and $acd$ all satisfy the condition in Lemma~\ref{lem:matrix}.  Then one of the following holds:
\begin{enumerate}[label=\emph{(\roman*)},leftmargin=2.2em]
    \item the $a$-row is all 1s and the other three rows are all 0s, or
    \item one column is all 1s and the other two columns are all 0s.
\end{enumerate}
\end{lemma}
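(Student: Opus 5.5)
By Lemma~\ref{lem:matrix}, each of the three $3\times 3$ submatrices $M_{abc}$, $M_{abd}$, $M_{acd}$ has exactly three 1s, and these lie either in a single row or in a single column. I will call a submatrix \emph{row-type} or \emph{column-type} accordingly. The plan is a short case analysis on the type of $M_{abc}$, propagated to the other two submatrices through the rows they share.

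\textbf{Case 1: $M_{abc}$ is column-type.} Say column $j$ is all 1s on rows $a,b,c$ and the other entries in those rows are 0. Then $M_{abd}$ has rows $a,b$ equal to $e_j$. If $M_{abd}$ were row-type, its three 1s would be in one row, but rows $a$ and $b$ each contain a 1. So $M_{abd}$ is column-type, and its column must be $j$. This forces row $d$ to equal $e_j$, which is conclusion (ii). The submatrix $M_{acd}$ is then automatically consistent.

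\textbf{Case 2: $M_{abc}$ is row-type.} Say row $x\in\{a,b,c\}$ is all 1s and the other two rows are 0. Every other submatrix containing row $x$ then contains an all-ones row, so it cannot be column-type: that would allow only one 1 per row. Hence it is row-type with the 1-row equal to $x$, and all its other rows are 0.
\begin{itemize}[leftmargin=2.2em]
\item If $x=a$, then $M_{abd}$ is row-type with 1-row $a$, so row $d$ is zero. This gives (i).
\item If $x=b$, then $M_{abd}$ is row-type with 1-row $b$, and row $d$ is zero. But $M_{acd}$ then consists of three zero rows, which has no 1s and contradicts Lemma~\ref{lem:matrix}.
\item If $x=c$, the symmetric argument applies: $M_{acd}$ forces row $d$ to be zero, and then $M_{abd}$ is all zero, again a contradiction.
\end{itemize}

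The key step is the observation that an all-ones row rules out column-type, and a row with exactly one 1 rules out row-type unless that row is the all-ones row. Once this is in place, everything is routine bookkeeping. The only point needing care is that row $a$ is special, since it lies in all three triples; this is exactly why case (i) singles out row $a$.
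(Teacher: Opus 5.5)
Your proof is correct and takes essentially the same route as the paper. Both classify each $3\times3$ submatrix as row-type or column-type via Lemma~\ref{lem:matrix}, propagate that type through the two rows any pair of submatrices share, and rule out an all-ones row other than $a$. Your handling of the subcases $x=b$ and $x=c$, where some submatrix becomes all zero, spells out explicitly the step the paper compresses into one sentence.
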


\begin{proof}
By Lemma~\ref{lem:matrix}, each of the three displayed $3\times3$ submatrices is of \emph{row type} or \emph{column type}.  If one of them is of column type, then every submatrix sharing two of its rows must be of the same column type; otherwise the two common rows would receive incompatible patterns.  Hence all three submatrices have the same full column of 1s.

It remains to consider the case where all three submatrices are of row type.  The row of 1s must be common to the three submatrices.  Indeed, if the row of 1s in the $abc$ submatrix were, say, $b$ or $c$, then the $acd$ and $abd$ submatrices would force two different non-$a$ rows to be all 1s, contradicting row type in the remaining submatrix.  Thus the common row is $a$.
\end{proof}

\begin{proposition}\label{prop:rank-two-free}
If $\operatorname{rank}(\calC)\le 2$, then $F\nrightarrow R(U,\calC)$. Consequently, every blowup of $R(U,\calC)$ is $F$-free.
\end{proposition}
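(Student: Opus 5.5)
The plan is to show $F\nrightarrow R(U,\calC)$ by reducing to one finite template and then using the matrix lemmas to split into two cases. The second sentence of the proposition follows at once, since every blowup of an $F$-hom-free $3$-graph is $F$-free.

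\textbf{Reduction to the full rank-two template.} Let $K=\bigcap_{c\in\calC}\ker c$. Two bottom vertices $u,v$ with $u+v\in K$ have identical links. Hence $R(U,\calC)$ is a blowup of $R(U/K,\calC)$, where $\calC$ is viewed as a set of forms on $U/K$ and $\dim(U/K)=\operatorname{rank}(\calC)\le 2$. Adding the missing nonzero forms and enlarging $U/K$ to $\mathbb F_2^2$ only adds apex vertices and edges. Therefore every $R(U,\calC)$ of rank at most two maps homomorphically into the full rank-two template
\[
T_2:=R(\mathbb F_2^2,(\mathbb F_2^2)^*\setminus\{0\}),
\]
which has $3$ apex and $4$ bottom vertices. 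It suffices to prove $F\nrightarrow T_2$.

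\textbf{Structure of a putative homomorphism $\phi:F\to T_2$.} For each edge $e=ijk$ of $\Fstar$, form the $4\times 3$ zero-one matrix $M^e$. Its rows are indexed by $a,b,c,d$, its columns by $i,j,k$, and $M^e_{x,\ell}=1$ exactly when $\phi(x,\ell)$ is an apex.

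Every edge of $K_4^-\times e$ has the form $\{(x,\sigma(i)),(y,\sigma(j)),(z,\sigma(k))\}$ with $xyz\in K_4^-$ and $\sigma$ a permutation of $\{i,j,k\}$. Each edge of $T_2$ contains exactly one apex. So every permutation diagonal of the submatrices on rows $abc$, $abd$, $acd$ sums to one, and Lemma~\ref{lem:four-by-three} applies to each $M^e$. This assigns each vertex $\ell$ of $\Fstar$ a column pattern: $\mathsf A=(1,1,1,1)$, $\mathsf B=(0,0,0,0)$, or $\mathsf R=(1,0,0,0)$. The pattern is well defined because the column of $\ell$ is literally the same in every $M^e$ with $e\ni\ell$.

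By the lemma, a row-type edge has all three vertices labelled $\mathsf R$, and a column-type edge has labels $\mathsf A,\mathsf B,\mathsf B$. Any two consecutive edges among $123,124,345,156,257$ share a vertex, and the edge hypergraph of $\Fstar$ is connected. Hence either every vertex of $\Fstar$ is $\mathsf R$, or every edge is of column type.

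\textbf{Case 1 (column type).} Fixing a row $x$, the map $\ell\mapsto\phi(x,\ell)$ is a homomorphism $\Fstar\to T_2$ in which every edge has exactly one apex, namely its $\mathsf A$-vertex. In this case I would use the bottom coordinates across different rows. For an edge $ijk$ with $\mathsf A$-vertex $i$, the cut $c=\phi(x,i)$ must separate $\phi(y,j)$ from $\phi(z,k)$ for every edge $xyz$ of $K_4^-$. The plan is to show this forces $c(\phi(y,j))\neq c(\phi(z,k))$ uniformly over the rows. Combined with the $F_5$ configuration $123,124,345$ inside $\Fstar$, together with $156$ and $257$, this should produce a vertex that must lie on both sides of a cut. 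That is a contradiction, verifiable by a finite check over the three cuts of $\mathbb F_2^2$.

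\textbf{Case 2 (row type).} Here $\phi(a,\ell)$ is an apex with cut $c_\ell$, and $\phi(x,\ell)=u_{x,\ell}$ is a bottom vertex for $x\in\{b,c,d\}$. Fix an edge $ijk\ni\ell=i$ and write $s_{x,m}=c_i(u_{x,m})$ for $x\in\{b,c,d\}$ and $m\in\{j,k\}$. The constraints are $s_{y,j}\ne s_{z,k}$ for all $y\ne z$ in $\{b,c,d\}$, and they form the $6$-cycle $b_j,c_k,d_j,b_k,c_j,d_k$. This forces $s_{x,j}$ to be independent of $x$, and $s_{\cdot,j}\neq s_{\cdot,k}$. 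So for each apex label $c_i$, the cut $c_i$ must separate the other two vertices of every edge through $i$, consistently in all three rows.

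Apply this to vertices $1$, $2$ and $5$, which lie in the edges $123,124,156$, then $123,124,257$, then $345,156,257$. There are only three cuts available, and any two distinct cuts of $\mathbb F_2^2$ generate the third. A pigeonhole and parity argument on the $2$-dimensional bottom space should then yield a contradiction.

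\textbf{Main obstacle.} I expect the hardest step to be the final finite verification in the two cases, that is, showing that the $F_5$-core with the extra edges $156$ and $257$ cannot be consistently cut by at most three forms of a $2$-dimensional space. If a clean parity argument does not emerge, I would fall back on an exhaustive check of all maps of the $7$ vertices of $\Fstar$ into the $7$ vertices of $T_2$, up to the symmetries of $T_2$.
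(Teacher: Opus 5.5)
Your setup matches the paper's, and it is correct: the reduction to the full rank-two template (your common-kernel quotient is a valid variant), the matrices $M^e$ with Lemma~\ref{lem:four-by-three}, the $\mathsf A/\mathsf B/\mathsf R$ column patterns, the connectivity argument, and your $6$-cycle step in the row case, which is exactly \eqref{eq:rank-two-row}. The gap is that neither case is closed, and the routes you sketch would not close them. In the column case, your claim that $\ell\mapsto\phi(r,\ell)$ is a homomorphism $\Fstar\to T_2$ does not follow. No triple $\{(r,i),(r,j),(r,k)\}$ is an edge of $F$, because edges of $K_4^-$ use three distinct rows. Also, for $r\neq a$ the constraints on the cut at $\phi(r,i)$ form a forest, not a $6$-cycle, so they do not force row-uniform separation. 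In fact cuts are irrelevant here. The labelling you already have, with exactly one $\mathsf A$-vertex in each edge of $\Fstar$, cannot exist. The edges $123,124$ force $3$ and $4$ to carry equal labels. Then $345$ forces both to be $\mathsf B$ and $5$ to be $\mathsf A$, $156$ forces $1$ to be $\mathsf B$, $123$ forces $2$ to be $\mathsf A$, and $257$ ends up with two $\mathsf A$-vertices. This one-line check is how the paper disposes of the column case.

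In the row case, your $6$-cycle step lets you fix one row. You then get $c_\ell\in\{x,y,x+y\}$ and $u_\ell\in\mathbb F_2^2$ with $c_i(u_j+u_k)=c_j(u_i+u_k)=c_k(u_i+u_j)=1$ for every edge $ijk\in\Fstar$. Showing that this system has no solution is the whole content of the proposition, since with three independent cuts it does have one (Proposition~\ref{prop:rank-three-contains}). You leave it at ``should yield a contradiction''.

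Your concrete plan, using only the apex conditions at $1,2,5$, cannot work. Those nine equations are all satisfied by $c_1=c_2=x$, $c_5=y$, $u_1=u_5=00$, $u_2=01$, $u_3=u_7=10$, $u_4=u_6=11$. The paper also needs the apices $3,4,6,7$:
\begin{itemize}
\item Apices $1,2,5$ give $c_1(u_3+u_4)=c_2(u_3+u_4)=0$ and $c_5(u_3+u_4)=1$. Hence $c_1=c_2\neq c_5$, because distinct nonzero forms on $\mathbb F_2^2$ have trivial common kernel; this is where rank two enters.
\item After normalizing $c_1=x$ and $u_1=00$ by an automorphism of $T_2$, apices $1,2$ give $x(u_2)=0$ and $x(u_3)=x(u_4)=1$. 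Apices $3,4$ on $123,124$ give $c_3(u_2)=c_4(u_2)=1$, so $u_2=01$ and $c_3,c_4\in\{y,x+y\}$.
\item Apices $3,4$ on $345$, together with $u_3\neq u_4$, force $c_3\neq c_4$ and $x(u_5)=0$. So $u_5\in\{u_1,u_2\}$.
\item Finally, apex $6$ on $156$ needs $u_5\neq u_1$, and apex $7$ on $257$ needs $u_5\neq u_2$, a contradiction.
\end{itemize}
Your fallback exhaustive search is also set over the wrong space. In the row case each vertex of $\Fstar$ carries both a cut and a bottom point, so maps from the $7$ vertices of $\Fstar$ to the $7$ vertices of $T_2$ do not encode these constraints.
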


\begin{proof}
It suffices to prove the statement for the full rank-two template
\[
        R_2:=R(\mathbb F_2^2,\{x,y,x+y\}),
\]
where $x,y$ are the two coordinate forms.  Indeed, every cut template of rank at most two maps homomorphically to $R_2$ by recording the values of two basis cuts.  Suppose, for a contradiction, that there is a homomorphism
\[
        \phi:F=K_4^-\times\Fstar\longrightarrow R_2.
\]

Fix an edge $ijk\in \Fstar$.  Define a $4\times3$ zero-one matrix by
\[
        M_{r,v}=1
        \quad\Longleftrightarrow\quad
        \phi(r,v)\text{ is an apex vertex of }R_2,
        \qquad r\in\{a,b,c,d\},\ v\in\{i,j,k\}.
\]
For each of the three edges $abc,abd,acd$ of $K_4^-$ and for every bijection from this edge to $\{i,j,k\}$, the corresponding triple is an edge of $F$.  Since every edge of $R_2$ contains exactly one apex vertex, the three submatrices on rows $abc$, $abd$ and $acd$ satisfy Lemma~\ref{lem:matrix}.  Lemma~\ref{lem:four-by-three} gives two possibilities for the edge $ijk$: either the $a$-row is the unique all-apex row, or one of the three columns is all-apex.

These two possibilities cannot be mixed on intersecting edges of $\Fstar$, because the column pattern of the common vertex would have to be both $(1,0,0,0)^T$ and either $(1,1,1,1)^T$ or $(0,0,0,0)^T$.  Since the edge-intersection graph of $\Fstar$ is connected, all edges of $\Fstar$ have the same type.

First suppose that all edges have column type.  Then there is a zero-one labelling $s_1,\ldots,s_7$ of $V(\Fstar)$ such that every edge of $\Fstar$ contains exactly one vertex with label 1.  Hence
\[
\begin{aligned}
        s_1+s_2+s_3&=1,
        &s_1+s_2+s_4&=1,
        &s_3+s_4+s_5&=1,\\
        s_1+s_5+s_6&=1,
        &&&s_2+s_5+s_7&=1.
\end{aligned}
\]
The first two equations give $s_3=s_4$.  The third gives $s_3=s_4=0$ and $s_5=1$.  Then the fourth gives $s_1=s_6=0$, the first gives $s_2=1$, and the fifth becomes $1+1+s_7=1$, impossible.

Thus all edges have row type.  For each $i\in V(\Fstar)$ let $c_i\in\{x,y,x+y\}$ be the cut corresponding to the apex vertex $\phi(a,i)$.  For $r\in\{b,c,d\}$, let
\[
        b_i^r\in\mathbb F_2^2
\]
be the bottom point corresponding to $\phi(r,i)$.  If $ijk\in \Fstar$, then the edge condition first gives $c_i(b_j^p+b_k^q)=1$ whenever $p\ne q$.  To see that row differences of the same bottom vertex are annihilated by $c_i$, take distinct rows $p,p'\in\{b,c,d\}$ and choose $q$ different from both.  Applying the edge condition with bottom rows $(p,q)$ and $(p',q)$ gives
\[
        c_i(b_j^p+b_k^q)=c_i(b_j^{p'}+b_k^q)=1,
\]
and hence $c_i(b_j^p+b_j^{p'})=0$.  The same argument applies to the $k$-vertex.  If $p=q$, choose $p'\ne p$; then
\[
        c_i(b_j^p+b_k^p)
        =
        c_i(b_j^{p'}+b_k^p)+c_i(b_j^p+b_j^{p'})
        =
        1+0=1.
\]
Consequently, for every two rows $p,q\in\{b,c,d\}$, possibly equal,
\begin{equation}\label{eq:rank-two-row}
        c_i(b_j^p+b_k^q)=1,
        \qquad
        c_i(b_j^p+b_j^q)=c_i(b_k^p+b_k^q)=0.
\end{equation}

Apply this to the two edges $123$ and $124$.  For instance, applying \eqref{eq:rank-two-row} to $123$ and $124$ with apex $1$ gives
\[
        c_1(b_2^r+b_3^p)=1,
        \qquad
        c_1(b_2^r+b_4^q)=1,
\]
and hence $c_1(b_3^p+b_4^q)=0$.  The same argument with apex $2$ gives the corresponding $c_2$-equation. Thus
\[
        c_1(b_3^p+b_4^q)=c_2(b_3^p+b_4^q)=0
        \qquad\text{for all }p,q,
\]
while the edge $345$, from the apex vertex $5$, gives
\[
        c_5(b_3^p+b_4^q)=1
        \qquad\text{for all }p,q.
\]
We now spell out the consequences of these equations.  If $c_1\ne c_2$, then $\ker c_1\cap\ker c_2=\{0\}$, so all vectors $b_3^p+b_4^q$ would be zero, contradicting $c_5(b_3^p+b_4^q)=1$.  Thus $c_1=c_2$.  Also $c_5\ne c_1$.  Since the system
\[
        c_1(v)=0,
        \qquad
        c_5(v)=1
\]
has a unique solution in $\mathbb F_2^2$, all vectors $b_3^p+b_4^q$ are equal.  Hence the bottom points assigned to vertices $3$ and $4$ are independent of the row indices.  Any invertible linear transformation of $\mathbb F_2^2$ permutes the three nonzero forms $x,y,x+y$ and induces an automorphism of $R_2$, so after a linear change of coordinates we may assume
\[
        c_1=c_2=x.
\]

The bottom points assigned to vertices $1$ and $2$ are also independent of the row indices.  Indeed, row differences for vertex $1$ lie in both $\ker c_2=\ker x$ and $\ker c_5$, and row differences for vertex $2$ lie in both $\ker c_1=\ker x$ and $\ker c_5$; since $c_5\ne x$, both intersections are zero.  Translating all bottom points by a fixed vector preserves $u+v$ in characteristic two and hence induces an automorphism of $R_2$, so we may write $b_1=(0,0)$.  The equations with apices $1$ and $2$ on the edge $123$ give $x(b_1+b_2)=0$, while the equation with apex $3$ gives $c_3(b_1+b_2)=1$; hence $b_2$ is the nonzero vector in $\ker x$.  Thus
\[
        b_1=(0,0),
        \qquad
        b_2=(0,1).
\]
The equations from the apex $1$ in the edges $123$ and $124$ give $x(b_3)=x(b_4)=1$, and the edge $345$ gives $b_3\ne b_4$.  Therefore
\[
        b_3=(1,s),
        \qquad
        b_4=(1,s+1)
\]
for some $s\in\mathbb F_2$.  Since $c_3(b_1+b_2)=c_4(b_1+b_2)=1$, both $c_3$ and $c_4$ have the form
\[
        c_3=p_3x+y,
        \qquad
        c_4=p_4x+y
\]
with $p_3,p_4\in\mathbb F_2$.

Write $b_5^r=(u_r,v_r)$ for $r\in\{b,c,d\}$.  From the edge $345$ we obtain, for every row $r$,
\[
        p_3(1+u_r)+s+v_r=0,
        \qquad
        p_4(1+u_r)+s+v_r=1.
\]
Adding gives $(p_3+p_4)(1+u_r)=1$, and hence $u_r=0$ for every $r$.

Finally, the edges $156$ and $257$ give
\[
        c_6(b_1+b_5^r)=1,
        \qquad
        c_7(b_2+b_5^r)=1
        \qquad\text{for every }r.
\]
But $b_1=(0,0)$, $b_2=(0,1)$, and $b_5^r=(0,v_r)$.  If $v_r=0$, then the first equation evaluates a linear form on the zero vector; if $v_r=1$, then the second equation does.  Both are impossible.
\end{proof}

\begin{proposition}\label{prop:rank-three-contains}
If $\operatorname{rank}(\calC)\ge 3$, then $F\to R(U,\calC)$. Thus Proposition~\ref{prop:rank-two-free} is sharp.
\end{proposition}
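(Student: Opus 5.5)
The plan is to reduce to a single explicit rank-three template and then write down a homomorphism by hand. The homomorphism will be of ``row type'' (the configuration that Proposition~\ref{prop:rank-two-free} showed is the only candidate). The row of $a$ goes to apex vertices, and the three other rows of $K_4^-$ collapse onto the same bottom points.

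\emph{Step 1 (reduction).} Choose three linearly independent forms $c_x,c_y,c_z\in\calC$. The map $U\to\mathbb F_2^3$, $u\mapsto(c_x(u),c_y(u),c_z(u))$, is surjective, so for each $w\in\mathbb F_2^3$ we fix a preimage $u_w\in U$. Sending $a_x\mapsto a_{c_x}$ (and similarly for $y,z$) and $w\mapsto u_w$ defines a homomorphism
\[
R_3:=R(\mathbb F_2^3,\{x,y,z\})\to R(U,\calC),
\]
because $x(w+w')=1$ implies $c_x(u_w+u_{w'})=1$. Hence it suffices to prove $F\to R_3$.

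\emph{Step 2 (reduction to a labelling problem).} Every edge $abc,abd,acd$ of $K_4^-$ contains $a$. We look for forms $c_1,\dots,c_7\in\{x,y,z\}$ and points $b_1,\dots,b_7\in\mathbb F_2^3$, and set
\[
\phi(a,i)=a_{c_i},\qquad \phi(r,i)=b_i \quad (r\in\{b,c,d\}).
\]
Every edge of $F$ lying over $ijk\in\Fstar$ then has exactly one vertex in row $a$, in some column $i$, and its other two vertices in columns $j,k$. So $\phi$ is a homomorphism provided that
\[
c_i(b_j+b_k)=1\quad\text{for every edge } ijk\in\Fstar \text{ and every choice of } i\in ijk.
\]
This condition also forces $b_j\ne b_k$, so the images are genuine triples.

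\emph{Step 3 (explicit solution).} I would follow the forced structure from the rank-two proof, namely $c_1=c_2=x$, $c_5=y$ and $b_1=0$, and use the third coordinate $z$ to escape the final contradiction found there. Concretely I propose
\[
b_1=(0,0,0),\ b_2=(0,0,1),\ b_3=(1,0,0),\ b_4=(1,1,0),\ b_5=(0,1,1),\ b_6=b_7=(1,1,0),
\]
\[
c_1=c_2=x,\quad c_3=c_4=z,\quad c_5=c_6=c_7=y .
\]
The $15$ required identities are then checked directly. For example, on $123$ we have $x(b_2+b_3)=x(1,0,1)=1$, $x(b_1+b_3)=1$ and $z(b_1+b_2)=1$. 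On $345$ we have $z(b_4+b_5)=z(1,0,1)=1$, $z(b_3+b_5)=1$ and $y(b_3+b_4)=1$. On $257$ we have $x(b_5+b_7)=1$, $y(b_2+b_7)=y(1,1,1)=1$ and $y(b_2+b_5)=1$. The edges $124$ and $156$ are handled the same way.

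The only real obstacle is Step 3: finding a consistent assignment. The rank-two argument shows which constraints are rigid. The edges $123,124,345$ force $c_1=c_2\ne c_5$, and in rank two the edges $156$ and $257$ then contradict each other through the first coordinate of $b_5$. The third independent cut $z$ lets $c_3=c_4$ be a new form, which frees $b_5$ from lying in $\ker x\cap\{\text{one of }b_1,b_2\}$, and the remaining choices are routine. Since Step 1 is immediate, the proposition follows, and combined with Proposition~\ref{prop:rank-two-free} this shows the rank-two bound is sharp.
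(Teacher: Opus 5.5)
Your proposal is correct and follows essentially the same route as the paper. You reduce via three independent cuts to $R(\mathbb F_2^3,\{x,y,z\})$; your arbitrary choice of preimages works just as well as the paper's linear right inverse, since $c_x(u_w+u_{w'})=x(w)+x(w')$ by linearity. You then send row $a$ to apex vertices and collapse rows $b,c,d$ onto a single bottom point per column. Your explicit labelling differs from the paper's (you take $c_6=c_7=c_5$, whereas the paper takes $c_6=c_7=c_1$), but I checked all $15$ conditions $c_i(b_j+b_k)=1$, including those for the edges $124$ and $156$ that you left unchecked, and they hold.
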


\begin{proof}
Choose three independent cuts $c^1,c^2,c^3\in\calC$.  Let $X,Y,Z$ be the coordinate forms on $\mathbb F_2^3$.  The linear map
\[
        \pi:U\to\mathbb F_2^3,
        \qquad
        \pi(u)=(c^1(u),c^2(u),c^3(u))
\]
is surjective, so it has a linear right inverse $s:\mathbb F_2^3\to U$.  Therefore the map sending a bottom vertex $w\in\mathbb F_2^3$ to $s(w)$ and the apex vertices $a_X,a_Y,a_Z$ to $a_{c^1},a_{c^2},a_{c^3}$, respectively, is a homomorphism
\[
        R(\mathbb F_2^3,\{X,Y,Z\})\to R(U,\calC).
\]
Thus it is enough to give a homomorphism into $R(\mathbb F_2^3,\{X,Y,Z\})$.

Set
\[
        d_1=d_2=d_6=d_7=X,
        \qquad
        d_3=d_4=Y,
        \qquad
        d_5=Z,
\]
and choose bottom points
\[
\begin{gathered}
        u_1=000,
        \quad u_2=010,
        \quad u_3=100,
        \quad u_4=101,\\
        u_5=110,
        \quad u_6=001,
        \quad u_7=001.
\end{gathered}
\]
For every edge $ijk\in\Fstar$, the required evaluations are
\[
\begin{array}{c|ccc}
ijk & d_i(u_j+u_k) & d_j(u_i+u_k) & d_k(u_i+u_j)\\
\hline
123 & 1 & 1 & 1\\
124 & 1 & 1 & 1\\
345 & 1 & 1 & 1\\
156 & 1 & 1 & 1\\
257 & 1 & 1 & 1
\end{array}
\]
and hence
\[
        d_i(u_j+u_k)=d_j(u_i+u_k)=d_k(u_i+u_j)=1.
\]
Hence the map
\[
        (a,i)\mapsto a_{d_i},
        \qquad
        (r,i)\mapsto u_i\quad(r\in\{b,c,d\})
\]
is a homomorphism from $K_4^-\times\Fstar$ to $R(\mathbb F_2^3,\{X,Y,Z\})$, and hence, by the lifting observation above, to $R(U,\calC)$.
\end{proof}

\subsection{Crossed blowups and infinite stability}

Let
\[
        R_\times:=R(\mathbb F_2^2,\{x,y\}).
\]
This is the rank-two cut-template version of the \emph{crossed-blowup} construction of Hou, Li, Liu, Mubayi and Zhang \cite{HLLMZ}.  The following elementary construction records the special case needed here.

\begin{theorem}\label{thm:HLLMZ-crossed}
There is a family of $R_\times$-blowups $G_\alpha(n)$, indexed by $\alpha\in(0,1/2)$, such that
\[
        |G_\alpha(n)|=
        \left(\frac1{27}+o(1)\right)n^3
        =\left(\frac29+o(1)\right)\binom n3,
\]
and for every distinct $\alpha,\beta\in(0,1/2)$,
\[
        \dist(G_\alpha(n),G_\beta(n))=\Omega_{\alpha,\beta}(n^3).
\]
\end{theorem}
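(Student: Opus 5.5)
The plan is to take explicit weighted blowups of $R_\times$ along a one-parameter curve of maximizers of the Lagrange polynomial. We then separate different members by a pair-codegree statistic that is robust under $o(n^3)$ edits.

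\textbf{The construction.} Write the bottom vertices of $R_\times$ as $00,01,10,11$ and the apices as $a_x,a_y$. The link of $a_x$ is the complete bipartite graph between $\{00,01\}$ and $\{10,11\}$, and the link of $a_y$ is the one between $\{00,10\}$ and $\{01,11\}$. Hence
\[
p_{R_\times}(w,z)=w_x(z_{00}+z_{01})(z_{10}+z_{11})+w_y(z_{00}+z_{10})(z_{01}+z_{11}).
\]
For $t\in(0,1/3)$ choose the weights
\[
w_x=w_y=\tfrac16,\qquad z_{00}=z_{11}=t,\qquad z_{01}=z_{10}=\tfrac13-t.
\]
Both cuts are then balanced, each with sides of weight $1/3$, so $p=\tfrac16\cdot\tfrac19+\tfrac16\cdot\tfrac19=\tfrac1{27}$, independently of $t$. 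Set $t=2\alpha/3$ with $\alpha\in(0,1/2)$ (so $t\in(0,1/3)$), and let $G_\alpha(n)$ be the blowup with part sizes $x_in+O(1)$. By the Lagrange-polynomial count in Section~\ref{sec:preliminaries},
\[
|G_\alpha(n)|=(\tfrac1{27}+o(1))n^3=(\tfrac29+o(1))\binom n3 .
\]

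\textbf{The invariant.} We use codegrees $d(u,v)=|\{w:uvw\in G\}|$. In $G_\alpha(n)$ every pair has codegree $cn+O(1)$ with $c\in\{0,\tfrac16,\tfrac13\}$. A pair $\{u,v\}$ has codegree $\approx n/3$ in exactly two cases.
\begin{itemize}[leftmargin=2.2em]
\item $u,v$ are bottom vertices from the antipodal classes $00,11$ or $01,10$, which lie on opposite sides of both cuts.
\item $u$ is an apex and $v$ is a bottom vertex. Here the codegree is the opposite side of the cut, of weight $1/3$.
\end{itemize}
Pairs in adjacent bottom classes have codegree $\approx n/6$, and all other pairs have codegree $0$. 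So the number of pairs with codegree in $[n/4,\,5n/12]$ is
\[
\bigl(t^2+(\tfrac13-t)^2+\tfrac13\cdot\tfrac23\bigr)n^2+O(n).
\]
Write $Q(t)$ for the coefficient of $n^2$. It depends on $t$ only through $(t-\tfrac16)^2$.

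\textbf{Robustness.} Suppose $\dist(G,G')\le\delta n^3$, and identify the vertex sets by an optimal bijection. Each edited triple changes three codegrees by one, so
\[
\sum_{\text{pairs}}|d_G-d_{G'}|\le 3\delta n^3 .
\]
Hence at most $3\delta n^3/(n/12)=36\delta n^2$ pairs change codegree by more than $n/12$. Because the codegree values in both graphs are $n/6$-separated, up to $O(1)$, the pair counts in the window $[n/4,5n/12]$ differ by at most $36\delta n^2+O(n)$. Thus $\dist(G_\alpha(n),G_\beta(n))\ge c\,|Q(t_\alpha)-Q(t_\beta)|\,n^3-O(n^2)$.

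\textbf{The main obstacle.} The obstacle is injectivity of $\alpha\mapsto Q(t_\alpha)$. Translation by $01$ is an automorphism of $R_\times$ that swaps $t\leftrightarrow\tfrac13-t$, so $G_\alpha$ and $G_{1/2-\alpha}$ are genuinely isomorphic. The parametrization must therefore use only one side of this symmetry, which is also why no invariant can separate the two. I would reparametrize as $t=\alpha/3$, so that $t\in(0,\tfrac16)$, where $(t-\tfrac16)^2$ is strictly monotone. Then distinct $\alpha,\beta\in(0,1/2)$ give $Q(t_\alpha)\ne Q(t_\beta)$, and hence $\dist=\Omega_{\alpha,\beta}(n^3)$.
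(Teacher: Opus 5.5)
Your proof is correct and is essentially the paper's. With the parametrization $t=\alpha/3$ that you settle on (the $t=2\alpha/3$ in your construction paragraph would indeed give $G_\alpha\cong G_{1/2-\alpha}$), your blowup is exactly the paper's $G_\alpha(n)$, and both arguments separate members by a relabelling-invariant codegree statistic. The only difference is the statistic. The paper uses $\sum_{uv} d(u,v)^2=\bigl(\tfrac{3-\alpha+\alpha^2}{81}+o(1)\bigr)n^4$, which changes by at most $3(2n+1)$ per edited edge with no further input. Your count of pairs with codegree near $n/3$ instead needs your extra observation that all codegrees cluster at $0,n/6,n/3$, so that $k$ edits can move only $O(k/n)$ pairs between clusters.
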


\begin{proof}
Let the two apex classes corresponding to $a_x$ and $a_y$ be $A_x$ and $A_y$, and let the four bottom classes corresponding to $00,01,10,11\in\mathbb F_2^2$ be $B_{00},B_{01},B_{10},B_{11}$.  Choose their sizes so that they sum to $n$ and satisfy
\[
        |A_x|=|A_y|=\frac n6+O(1),
        \qquad
        |B_{00}|=|B_{11}|=\frac{\alpha n}{3}+O(1),
        \qquad
        |B_{01}|=|B_{10}|=\frac{(1-\alpha)n}{3}+O(1).
\]
Let $G_\alpha(n)$ be the corresponding blowup of $R_\times$.  Each side of each of the two cuts on the bottom vertices has size $n/3+O(1)$, and hence
\[
        |G_\alpha(n)|
        =
        |A_x|\bigl(|B_{00}|+|B_{01}|\bigr)\bigl(|B_{10}|+|B_{11}|\bigr)
        +
        |A_y|\bigl(|B_{00}|+|B_{10}|\bigr)\bigl(|B_{01}|+|B_{11}|\bigr)
        =
        \frac{n^3}{27}+O(n^2).
\]

It remains to prove separation.  For a 3-graph $H$, write
\[
        Q(H):=\sum_{\{u,v\}\in\binom{V(H)}2} d_H(u,v)^2,
\]
where $d_H(u,v)$ is the codegree of $u$ and $v$.  If two $n$-vertex 3-graphs differ in $k$ edges, then changing these edges changes $Q$ by at most $3(2n+1)k$, since changing one edge changes the codegrees of exactly three pairs by $1$.

For $G_\alpha(n)$, the apex--bottom pairs contribute $2n^4/81+o(n^4)$ to $Q$.  Among bottom pairs, pairs differing in exactly one coordinate contribute $\alpha(1-\alpha)n^4/81+o(n^4)$, while pairs differing in both coordinates contribute $\bigl(\alpha^2+(1-\alpha)^2\bigr)n^4/81+o(n^4)$. Thus
\[
        Q(G_\alpha(n))
        =
        \left(\frac{3-\alpha+\alpha^2}{81}+o(1)\right)n^4.
\]
The function $3-\alpha+\alpha^2$ is strictly decreasing on $(0,1/2)$. Hence, for distinct $\alpha,\beta\in(0,1/2)$, the quantities $Q(G_\alpha(n))$ and $Q(G_\beta(n))$ differ by $\Omega_{\alpha,\beta}(n^4)$. Since $Q$ is invariant under relabelling, the preceding Lipschitz bound implies $\dist(G_\alpha(n),G_\beta(n))=\Omega_{\alpha,\beta}(n^3)$.
\end{proof}

\begin{proof}[Proof of Theorem~\ref{thm:main}]
Corollary~\ref{cor:exact-density} gives $\pi(F)=2/9$.  It remains to prove infinite stability.  By Proposition~\ref{prop:rank-two-free}, every $G_\alpha(n)$ in Theorem~\ref{thm:HLLMZ-crossed} is $F$-free, and by Corollary~\ref{cor:exact-density} these graphs are asymptotically extremal for $F$.

Suppose, for a contradiction, that $\xi(F)\le t$. Choose distinct parameters $\alpha_1,\ldots,\alpha_{t+1}\in(0,1/2)$.  By Theorem~\ref{thm:HLLMZ-crossed}, the finitely many pairwise separation constants have a positive minimum $c>0$. Thus, for all sufficiently large $n$,
\[
        \dist(G_{\alpha_i}(n),G_{\alpha_j}(n))\ge c n^3
        \qquad\text{for all }i\ne j.
\]
Set $\delta=c/3$.  By $t$-stability, fix the corresponding template graphs $H_1(m),\ldots,H_t(m)$ for each $m$. For this value of $\delta$, there are $\epsilon>0$ and $N_0$ such that every $n$-vertex $F$-free graph with more than $(1-\epsilon)\ex(n,F)$ edges is within $\delta n^3$ edits of one of the $H_\ell(n)$.  For all sufficiently large $n$, each $G_{\alpha_i}(n)$ satisfies this edge condition, since it is asymptotically extremal.  By the pigeonhole principle, two of the $t+1$ graphs $G_{\alpha_i}(n)$ and $G_{\alpha_j}(n)$ are both within $\delta n^3$ edits of the same template.  Hence, by the triangle inequality for edit distance, their edit distance is at most $2\delta n^3<c n^3$, a contradiction.  Therefore $\xi(F)=\infty$.
\end{proof}

\section*{Acknowledgements}
H.L. was supported by the National Natural Science Foundation of China (12501487), by China Scholarship Council
and IBS-R029-C4.
X.L. was supported by the Excellent Young Talents Program (Overseas) of the National Natural Science Foundation of China.

\section*{Declaration on the use of AI}
The authors used generative AI tools to assist in discussing proof strategies, checking proofs, and improving exposition. All mathematical arguments, results, and conclusions were reviewed and verified by the authors.

\bibliographystyle{abbrv}
\bibliography{Turan}

@article {BaloghClemenLuo,
    AUTHOR = {Balogh, J\'{o}zsef and Clemen, Felix Christian and Luo,
              Haoran},
     TITLE = {Non-degenerate hypergraphs with exponentially many extremal
              constructions},
   JOURNAL = {J. Combin. Theory Ser. B},
  FJOURNAL = {Journal of Combinatorial Theory. Series B},
    VOLUME = {175},
      YEAR = {2025},
     PAGES = {1--28},
      ISSN = {0095-8956,1096-0902},
   MRCLASS = {05C35 (05C65)},
  MRNUMBER = {4919811},
MRREVIEWER = {Yuval\ Wigderson},
       DOI = {10.1016/j.jctb.2025.06.001},
       URL = {https://doi.org/10.1016/j.jctb.2025.06.001},
}

@article {BaloghMubayi,
    AUTHOR = {Balogh, J\'{o}zsef and Mubayi, Dhruv},
     TITLE = {Almost all triangle-free triple systems are tripartite},
   JOURNAL = {Combinatorica},
  FJOURNAL = {Combinatorica. An International Journal on Combinatorics and
              the Theory of Computing},
    VOLUME = {32},
      YEAR = {2012},
    NUMBER = {2},
     PAGES = {143--169},
      ISSN = {0209-9683,1439-6912},
   MRCLASS = {05C65 (05D40)},
  MRNUMBER = {2927636},
MRREVIEWER = {A.\ Rosa},
       DOI = {10.1007/s00493-012-2657-4},
       URL = {https://doi.org/10.1007/s00493-012-2657-4},
}

@article {Bollobas,
    AUTHOR = {Bollob\'{a}s, B\'{e}la},
     TITLE = {Three-graphs without two triples whose symmetric difference is
              contained in a third},
   JOURNAL = {Discrete Math.},
  FJOURNAL = {Discrete Mathematics},
    VOLUME = {8},
      YEAR = {1974},
     PAGES = {21--24},
      ISSN = {0012-365X,1872-681X},
   MRCLASS = {05C99},
  MRNUMBER = {345869},
       DOI = {10.1016/0012-365X(74)90105-8},
       URL = {https://doi.org/10.1016/0012-365X(74)90105-8},
}

@incollection {BrownTuran34,
    AUTHOR = {Brown, W. G.},
     TITLE = {On an open problem of {P}aul {T}ur\'{a}n concerning
              {$3$}-graphs},
 BOOKTITLE = {Studies in pure mathematics},
     PAGES = {91--93},
 PUBLISHER = {Birkh\"{a}user, Basel},
      YEAR = {1983},
      ISBN = {3-7643-1288-2},
   MRCLASS = {05C35},
  MRNUMBER = {820213},
}

@article {BrownSimonovits,
    AUTHOR = {Brown, W. G. and Simonovits, M.},
     TITLE = {Digraph extremal problems, hypergraph extremal problems, and
              the densities of graph structures},
   JOURNAL = {Discrete Math.},
  FJOURNAL = {Discrete Mathematics},
    VOLUME = {48},
      YEAR = {1984},
    NUMBER = {2-3},
     PAGES = {147--162},
      ISSN = {0012-365X,1872-681X},
   MRCLASS = {05C35},
  MRNUMBER = {737261},
MRREVIEWER = {G.\ W.\ Peck},
       DOI = {10.1016/0012-365X(84)90178-X},
       URL = {https://doi.org/10.1016/0012-365X(84)90178-X},
}

@article {Culik1958,
    AUTHOR = {\v{C}ul\'{\i}k, Karel},
     TITLE = {Zur {T}heorie der {G}raphen},
   JOURNAL = {\v{C}asopis P\v{e}st. Mat.},
  FJOURNAL = {\v{C}eskoslovensk\'{a} Akademie V\v{e}d. \v{C}asopis Pro
              P\v{e}stov\'{a}n\'{\i} Matematiky},
    VOLUME = {83},
      YEAR = {1958},
     PAGES = {133--155},
      ISSN = {0528-2195},
   MRCLASS = {55.00 (04.00)},
  MRNUMBER = {97805},
MRREVIEWER = {Gert\ Sabidussi},
}

@article {DorflerWaller,
    AUTHOR = {D\"{o}rfler, W. and Waller, D. A.},
     TITLE = {A category-theoretical approach to hypergraphs},
   JOURNAL = {Arch. Math. (Basel)},
  FJOURNAL = {Archiv der Mathematik},
    VOLUME = {34},
      YEAR = {1980},
    NUMBER = {2},
     PAGES = {185--192},
      ISSN = {0003-889X,1420-8938},
   MRCLASS = {05C65 (18B99)},
  MRNUMBER = {583768},
MRREVIEWER = {A.\ Pultr},
       DOI = {10.1007/BF01224952},
       URL = {https://doi.org/10.1007/BF01224952},
}

@article {ErdosStone,
    AUTHOR = {Erd\H{o}s, P. and Stone, A. H.},
     TITLE = {On the structure of linear graphs},
   JOURNAL = {Bull. Amer. Math. Soc.},
  FJOURNAL = {Bulletin of the American Mathematical Society},
    VOLUME = {52},
      YEAR = {1946},
     PAGES = {1087--1091},
      ISSN = {0002-9904},
   MRCLASS = {56.0X},
  MRNUMBER = {18807},
MRREVIEWER = {H.\ S. M. Coxeter},
       DOI = {10.1090/S0002-9904-1946-08715-7},
       URL = {https://doi.org/10.1090/S0002-9904-1946-08715-7},
}

@article {ErdosSimonovitsLimit,
    AUTHOR = {Erd\H{o}s, P. and Simonovits, M.},
     TITLE = {A limit theorem in graph theory},
   JOURNAL = {Studia Sci. Math. Hungar.},
  FJOURNAL = {Studia Scientiarum Mathematicarum Hungarica. Combinatorics,
              Geometry and Topology (CoGeTo)},
    VOLUME = {1},
      YEAR = {1966},
     PAGES = {51--57},
      ISSN = {0081-6906,1588-2896},
   MRCLASS = {05.40},
  MRNUMBER = {205876},
MRREVIEWER = {W.\ Moser},
}

@article {FranklFuredi,
    AUTHOR = {Frankl, Peter and F\"{u}redi, Zolt\'{a}n},
     TITLE = {A new generalization of the {E}rd{\H{o}}s--{K}o--{R}ado theorem},
   JOURNAL = {Combinatorica},
  FJOURNAL = {Combinatorica. An International Journal of the J\'{a}nos
              Bolyai Mathematical Society},
    VOLUME = {3},
      YEAR = {1983},
    NUMBER = {3-4},
     PAGES = {341--349},
      ISSN = {0209-9683},
   MRCLASS = {05A05 (05B30)},
  MRNUMBER = {729787},
MRREVIEWER = {\c{S}erban\ Buze\c{t}eanu},
       DOI = {10.1007/BF02579190},
       URL = {https://doi.org/10.1007/BF02579190},
}

@article {Frohmader,
    AUTHOR = {Frohmader, Andrew},
     TITLE = {More constructions for {T}ur\'{a}n's {$(3,4)$}-conjecture},
   JOURNAL = {Electron. J. Combin.},
  FJOURNAL = {Electronic Journal of Combinatorics},
    VOLUME = {15},
      YEAR = {2008},
    NUMBER = {1},
     PAGES = {Research Paper 137, 23},
      ISSN = {1077-8926},
   MRCLASS = {05C65},
  MRNUMBER = {2465761},
MRREVIEWER = {Yi\ Zhao},
       DOI = {10.37236/861},
       URL = {https://doi.org/10.37236/861},
}

@article {HellmuthOstermeierStadler,
    AUTHOR = {Hellmuth, Marc and Ostermeier, Lydia and Stadler, Peter F.},
     TITLE = {A survey on hypergraph products},
   JOURNAL = {Math. Comput. Sci.},
  FJOURNAL = {Mathematics in Computer Science},
    VOLUME = {6},
      YEAR = {2012},
    NUMBER = {1},
     PAGES = {1--32},
      ISSN = {1661-8270,1661-8289},
   MRCLASS = {05C65 (05C76 68R10)},
  MRNUMBER = {2910504},
       DOI = {10.1007/s11786-012-0109-6},
       URL = {https://doi.org/10.1007/s11786-012-0109-6},
}

@article {HLLMZ,
    AUTHOR = {Hou, Jianfeng and Li, Heng and Liu, Xizhi and Mubayi, Dhruv
              and Zhang, Yixiao},
     TITLE = {Hypergraphs with infinitely many extremal constructions},
   JOURNAL = {Discrete Anal.},
  FJOURNAL = {Discrete Analysis},
      YEAR = {2023},
      NOTE = {Paper No. 18, 34 pp.},
      ISSN = {2397-3129},
   MRCLASS = {05C35 (05C65)},
  MRNUMBER = {4684860},
MRREVIEWER = {Mark\ Rowland\ Budden},
       DOI = {10.19086/da.88508},
       URL = {https://doi.org/10.19086/da.88508},
}

@incollection {KeevashSurvey,
    AUTHOR = {Keevash, Peter},
     TITLE = {Hypergraph {T}ur\'{a}n problems},
 BOOKTITLE = {Surveys in combinatorics 2011},
    SERIES = {London Math. Soc. Lecture Note Ser.},
    VOLUME = {392},
     PAGES = {83--139},
 PUBLISHER = {Cambridge Univ. Press, Cambridge},
      YEAR = {2011},
      ISBN = {978-1-107-60109-3},
   MRCLASS = {05-02 (05C65)},
  MRNUMBER = {2866732},
}

@article {KeevashMubayi,
    AUTHOR = {Keevash, Peter and Mubayi, Dhruv},
     TITLE = {Stability theorems for cancellative hypergraphs},
   JOURNAL = {J. Combin. Theory Ser. B},
  FJOURNAL = {Journal of Combinatorial Theory. Series B},
    VOLUME = {92},
      YEAR = {2004},
    NUMBER = {1},
     PAGES = {163--175},
      ISSN = {0095-8956,1096-0902},
   MRCLASS = {05C65 (05C35)},
  MRNUMBER = {2078500},
MRREVIEWER = {Jen\H{o}\ Lehel},
       DOI = {10.1016/j.jctb.2004.05.003},
       URL = {https://doi.org/10.1016/j.jctb.2004.05.003},
}

@article {Kostochka,
    AUTHOR = {Kostochka, A. V.},
     TITLE = {A class of constructions for {T}ur\'{a}n's
              {$(3,\,4)$}-problem},
   JOURNAL = {Combinatorica},
  FJOURNAL = {Combinatorica. An International Journal of the J\'{a}nos
              Bolyai Mathematical Society},
    VOLUME = {2},
      YEAR = {1982},
    NUMBER = {2},
     PAGES = {187--192},
      ISSN = {0209-9683},
   MRCLASS = {05C35 (05C65)},
  MRNUMBER = {685045},
MRREVIEWER = {K.\ Vesztergombi},
       DOI = {10.1007/BF02579317},
       URL = {https://doi.org/10.1007/BF02579317},
}

@misc {KralKucerakLamaisonTardos,
    AUTHOR = {Kr\'{a}l', Daniel and Ku\v{c}er\'{a}k, Filip and Lamaison,
              Ander and Tardos, G\'{a}bor},
     TITLE = {Uniform {T}ur\'{a}n density---palette classification},
      NOTE = {arXiv:2505.17325},
      YEAR = {2025},
       URL = {https://arxiv.org/abs/2505.17325},
}

@misc {LamaisonPalette,
    AUTHOR = {Lamaison, Ander},
     TITLE = {Palettes determine uniform {T}ur\'{a}n density},
      NOTE = {arXiv:2408.09643},
      YEAR = {2024},
       URL = {https://arxiv.org/abs/2408.09643},
}

@article {LiuMubayiNoStability,
    AUTHOR = {Liu, Xizhi and Mubayi, Dhruv},
     TITLE = {A hypergraph {T}ur\'{a}n problem with no stability},
   JOURNAL = {Combinatorica},
  FJOURNAL = {Combinatorica. An International Journal on Combinatorics and
              the Theory of Computing},
    VOLUME = {42},
      YEAR = {2022},
    NUMBER = {3},
     PAGES = {433--462},
      ISSN = {0209-9683,1439-6912},
   MRCLASS = {05C35 (05D05)},
  MRNUMBER = {4482095},
       DOI = {10.1007/s00493-021-4561-2},
       URL = {https://doi.org/10.1007/s00493-021-4561-2},
}

@article {LiuMubayiReiherUnified,
    AUTHOR = {Liu, Xizhi and Mubayi, Dhruv and Reiher, Christian},
     TITLE = {A unified approach to hypergraph stability},
   JOURNAL = {J. Combin. Theory Ser. B},
  FJOURNAL = {Journal of Combinatorial Theory. Series B},
    VOLUME = {158},
      YEAR = {2023},
    NUMBER = {part 2},
     PAGES = {36--62},
      ISSN = {0095-8956,1096-0902},
   MRCLASS = {05C35 (05C65)},
  MRNUMBER = {4484827},
MRREVIEWER = {Jian\ Wang},
       DOI = {10.1016/j.jctb.2022.08.008},
       URL = {https://doi.org/10.1016/j.jctb.2022.08.008},
}

@article {LiuMubayiReiherMany,
    AUTHOR = {Liu, Xizhi and Mubayi, Dhruv and Reiher, Christian},
     TITLE = {Hypergraphs with many extremal configurations},
   JOURNAL = {Israel J. Math.},
  FJOURNAL = {Israel Journal of Mathematics},
    VOLUME = {271},
      YEAR = {2026},
    NUMBER = {1},
     PAGES = {1--38},
      ISSN = {0021-2172,1565-8511},
   MRCLASS = {05C35 (05C65)},
  MRNUMBER = {5014038},
       DOI = {10.1007/s11856-025-2792-4},
       URL = {https://doi.org/10.1007/s11856-025-2792-4},
}

@article {LiuPikhurko,
    AUTHOR = {Liu, Xizhi and Pikhurko, Oleg},
     TITLE = {Finite hypergraph families with rich extremal {T}ur\'{a}n
              constructions via mixing patterns},
   JOURNAL = {Forum Math. Sigma},
  FJOURNAL = {Forum of Mathematics. Sigma},
    VOLUME = {13},
      YEAR = {2025},
     PAGES = {Paper No. e53, 49},
      ISSN = {2050-5094},
   MRCLASS = {05D05 (05C35 05C65)},
  MRNUMBER = {4874849},
       DOI = {10.1017/fms.2025.12},
       URL = {https://doi.org/10.1017/fms.2025.12},
}

@article {MubayiTriangleFree,
    AUTHOR = {Mubayi, Dhruv},
     TITLE = {Structure and stability of triangle-free set systems},
   JOURNAL = {Trans. Amer. Math. Soc.},
  FJOURNAL = {Transactions of the American Mathematical Society},
    VOLUME = {359},
      YEAR = {2007},
    NUMBER = {1},
     PAGES = {275--291},
      ISSN = {0002-9947,1088-6850},
   MRCLASS = {05C35 (05C65)},
  MRNUMBER = {2247891},
MRREVIEWER = {J\'{o}zsef\ Balogh},
       DOI = {10.1090/S0002-9947-06-04009-8},
       URL = {https://doi.org/10.1090/S0002-9947-06-04009-8},
}

@article {PikhurkoGeneralizedTriangle,
    AUTHOR = {Pikhurko, Oleg},
     TITLE = {An exact {T}ur\'{a}n result for the generalized triangle},
   JOURNAL = {Combinatorica},
  FJOURNAL = {Combinatorica. An International Journal on Combinatorics and
              the Theory of Computing},
    VOLUME = {28},
      YEAR = {2008},
    NUMBER = {2},
     PAGES = {187--208},
      ISSN = {0209-9683,1439-6912},
       DOI = {10.1007/s00493-008-2187-2},
       URL = {https://doi.org/10.1007/s00493-008-2187-2},
}

@inproceedings {SimonovitsStability,
    AUTHOR = {Simonovits, M.},
     TITLE = {A method for solving extremal problems in graph theory,
              stability problems},
 BOOKTITLE = {Theory of {G}raphs ({P}roc. {C}olloq., {T}ihany, 1966)},
     PAGES = {279--319},
 PUBLISHER = {Academic Press, New York-London},
      YEAR = {1968},
   MRCLASS = {05.55},
  MRNUMBER = {233735},
MRREVIEWER = {J.\ W.\ Moon},
}

@article {Turan1941,
    AUTHOR = {Tur\'{a}n, Paul},
     TITLE = {Eine {E}xtremalaufgabe aus der {G}raphentheorie},
   JOURNAL = {Mat. Fiz. Lapok},
  FJOURNAL = {Matematikai \'{e}s Fizikai Lapok},
    VOLUME = {48},
      YEAR = {1941},
     PAGES = {436--452},
      ISSN = {0302-7317},
   MRCLASS = {56.0X},
  MRNUMBER = {18405},
MRREVIEWER = {P.\ Erd\H{o}s},
}

@article {Weichsel1962,
    AUTHOR = {Weichsel, Paul M.},
     TITLE = {The {K}ronecker product of graphs},
   JOURNAL = {Proc. Amer. Math. Soc.},
  FJOURNAL = {Proceedings of the American Mathematical Society},
    VOLUME = {13},
      YEAR = {1962},
     PAGES = {47--52},
      ISSN = {0002-9939,1088-6826},
   MRCLASS = {55.10},
  MRNUMBER = {133816},
MRREVIEWER = {W.\ Moser},
       DOI = {10.2307/2033769},
       URL = {https://doi.org/10.2307/2033769},
}

@article {ZhangHouLi2Stable,
    AUTHOR = {Zhang, Yixiao and Hou, Jianfeng and Li, Heng},
     TITLE = {A 2-stable family of triple systems},
   JOURNAL = {Electron. J. Combin.},
  FJOURNAL = {Electronic Journal of Combinatorics},
    VOLUME = {31},
      YEAR = {2024},
    NUMBER = {2},
     PAGES = {Paper No. 2.3, 23},
      ISSN = {1077-8926},
   MRCLASS = {05C65 (05C35 05D05)},
  MRNUMBER = {4734462},
       DOI = {10.37236/11701},
       URL = {https://doi.org/10.37236/11701},
}

@article {Zykov1949,
    AUTHOR = {Zykov, A. A.},
     TITLE = {On some properties of linear complexes},
   JOURNAL = {Mat. Sbornik N.S.},
  FJOURNAL = {Mat. Sbornik N.S.},
    VOLUME = {24(66)},
      YEAR = {1949},
     PAGES = {163--188},
   MRCLASS = {56.0X},
  MRNUMBER = {35428},
MRREVIEWER = {W.\ T.\ Tutte},
}

\end{document}